\title{
   On the asymptotic behaviour of the graded-star-co\-di\-men\-sion sequence of upper triangular matrices
    }
\author{
    Diogo Diniz and Felipe Yasumura
    }
\abstract{%
   We study the algebra of upper triangular matrices endowed with a group grading and a homogeneous involution over an infinite field. We compute the asymptotic behaviour of its (graded) star-codimension sequence. It turns out that the asymptotic growth of the sequence is independent of the grading and the involution under consideration, depending solely on the size of the matrix algebra. This independence of the group grading also applies to the graded codimension sequence of the associative algebra of upper triangular matrices.
    }
\keywords{
    Homogeneous involutions, algebra of upper block-triangular matrices, graded-star-co\-di\-men\-sion sequence.
    }
\begin{document}

\section{Introduction}
There is a recent interest in the investigation of polynomial identities with additional structure, in particular, the structure of a graded algebra and an involution. In the present paper, we investigate the asymptotic behaviour of the (graded)-star-codimension sequence of the associative algebra of upper triangular matrices $\mathrm{UT}_n$.

It is worth mentioning that the group gradings on $\mathrm{UT}_n$ are classified in \cite{VinKoVa2004,VaZa2007}. Their ordinary involutions were classified in \cite{VKS}, where the $\ast$-polynomial identities were computed for $n=2$ and $n=3$. Considering the mixture of the grading and the involution, their respective homogeneous involutions were classified in \cite{Mello}. In contrast, the graded-involutions were classified in \cite{FDGY} (see \cite{DRG} as well). The paper \cite{DRG} computes the $\ast$-graded polynomial identities of $\mathrm{UT}_n$, where the grading is the most fine that admits a graded-involution. On a similar direction, \cite{MY} computes the homogeneous $\ast$-polynomial identities of $\mathrm{UT}_n$, endowed with the unique fine grading of $\mathrm{UT}_n$ and an arbitrary homogeneous involution. In both papers, the authors compute the asymptotic behaviour of the graded-star-codimension sequence and the result is the same in each of the cases. Hence, the authors of the paper \cite{DRG} ask if it is possible to compute the asymptotic behaviour of the graded-star-codimension sequence of any graded-involution on $\mathrm{UT}_n$.

This paper answers the above question in a slightly more general context. We compute the asymptotic behaviour of $\mathrm{UT}_n$, endowed with a group grading and a homogeneous involution. We conclude that such behaviour is independent of the grading and of the involution in consideration. A similar phenomenon happens to the graded-codimension sequence, where the asymptotic behaviour of the graded-codimension sequence of $\mathrm{UT}_n$ is independent of the group grading (see \cite{KY}).

\section{Notation and basic results}
We denote by $\mathbb{F}$ an infinite field and all algebras in consideration will be over $\mathbb{F}$.
\subsection{Graded algebra} Let $G$ be any group. We use the multiplicative notation for $G$ and denote its neutral element by $1$. We say that an algebra $\mathcal{A}$ is $G$-graded if there exists a vector-space decomposition $\mathcal{A}=\bigoplus_{g\in G}\mathcal{A}_g$ such that $\mathcal{A}_g\mathcal{A}_h\subseteq\mathcal{A}_{gh}$, for all $g,h\in G$. The decomposition is called a \emph{$G$-grading} and we are going to denote it by $\Gamma$.

A homogeneous involution is defined as follows:
\begin{definition}
Let $\mathcal{A}=\bigoplus_{g\in G}\mathcal{A}_g$ be a $G$-graded algebra and let $\psi:G\to G$ be a map. An involution $\ast$ on $\mathcal{A}$ is a \emph{homogeneous involution} with respect to $\psi$ or a $\psi$-involution if $\mathcal{A}_g^\ast\subseteq\mathcal{A}_{\psi(g)}$, for all $g\in G$.
\end{definition}

\subsection{Free graded algebra with homogeneous involution}
We shall provide a construction of the free graded algebra endowed with a homogeneous involution (see, for instance, \cite{MY}). This is done using a particular case of the (relatively) free universal algebra in an adequate variety (see, for instance, \cite[Chapter 1]{Razmyslov} for a general discussion and \cite{BY,BY2} as well for a particular graded version). Let $G$ be any group and $X^G=\dot\bigcup_{g\in G}X^{(g)}$, where $X^{(g)}=\{x_1^{(g)},x_2^{(g)},\ldots\}$. Let $\ast:G\to G$ be an involution, that is, an anti-automorphism of order (at most) $2$. 
Let $\mathbb{F}\{X^G,\ast\}$ denote the absolutely free $G$-graded binary algebra endowed with an unary operation (also denoted by $\ast$).
We define the \emph{free $G$-graded associative algebra with a homogeneous involution} with respect to $\ast$, $\mathbb{F}\langle X^G,\ast\rangle$, 
as the quotient of $\mathbb{F}\{X^G,\ast\}$ by the following polynomials
\begin{align}
\begin{split}
&x_1^{(g_1)}(x_2^{(g_2)}x_3^{(g_3)})-(x_1^{(g_1)}x_2^{(g_2)})x_3^{(g_3)}\\%
&(x_1^{(g_1)}x_2^{(g_2)})^\ast-(x_2^{(g_2)})^\ast(x_1^{(g_1)})^\ast\\%
&((x^{(g)})^\ast)^\ast-x^{(g)}\\%
&\deg_G(x^{(g)})^\ast = (\deg_Gx^{(g)})^\ast.
\end{split}
\end{align}
The first polynomial defines the associativity relation while the second and third indicate that $\ast$ acts as an involution in the quotient algebra. The last connects the involution $\ast$ of the group and the unary operation $\ast$ of the algebra.

Given a $G$-graded algebra $(\mathcal{A},\Gamma)$ with a homogeneous involution $\ast$, we denote by $\mathrm{Id}(\mathcal{A},\Gamma,\ast)$ the set of all of its graded polynomial identities with involution. If $\Gamma=\Gamma_\mathrm{triv}$ is the trivial grading, then we recover the notion of $\ast$-polynomial identity. In this case, we denote by $\mathrm{Id}(\mathcal{A},\ast)$ the set of all $\ast$-polynomial identities of $(\mathcal{A},\ast)$.

\subsection{Codimension sequence} We recall some definitions concerning codimension sequence. Let $G$ be a group and consider a $G$-grading $\Gamma$ on an algebra $\mathcal{A}$. We assume that $\ast$ is a homogeneous involution on $(\mathcal{A},\Gamma)$. We denote
$$
P^{(\ast)}_m=\mathrm{Span}\left\{\left(x_{\sigma(1)}^{(g_1)}\right)^{\delta_1}\cdots\left(x_{\sigma(m)}^{(g_m)}\right)^{\delta_m} \ \middle| \ \sigma\in\mathcal{S}_m,g_1,\ldots,g_m\in G,\delta_\ell\in\{\emptyset,\ast\}\right\},
$$
where $\mathcal{S}_m$ is the symmetric group on the set of $m$ elements.

We define the $(G,\ast)$-codimension sequence by
$$
c_m(\mathcal{A},\Gamma,\ast)=\dim P_m^{(\ast)}/P_m^{(\ast)}\cap\mathrm{Id}(\mathcal{A},\Gamma,\ast).
$$
For the particular case where $\Gamma=\Gamma_\mathrm{triv}$ is the trivial grading, then we recover the notion of $\ast$-codimension sequence. In this case, we denote
$$
c_m(\mathcal{A},\ast)=c_m(\mathcal{A},\Gamma_\mathrm{triv},\ast).
$$
Two maps $f$, $g:\mathbb{N}\to\mathbb{N}$ are said to be \emph{asymptotically equal} if $\lim_{n\to\infty}\frac{f(n)}{g(n)}=1$. In this case, we denote $f\sim g$.

\subsection{Gradings and homogeneous involutions on \texorpdfstring{$\mathrm{UT}_n$}{\mathrm{UT}}}
The group gradings on $\mathrm{UT}_n$ are classified in two papers. Every group grading is isomorphic to an elementary grading \cite{VaZa2007} and isomorphism classes of elementary group gradings are known \cite[Theorem 2.3]{VinKoVa2004}.

The involutions of the first kind on $UT_n$ were described in \cite{VKS} over fields of characteristic not $2$. If $n$ is odd each involution on $UT_n$ is equivalent to a single involution $\tau$, which we will call the orthogonal involution. It is given by reflection along the secondary diagonal, that is, for each $i\leq j$, $e_{i,j}^\tau = e_{n-j+1,n-i+1}$. If $n = 2m$ is even, any involution on $UT_n$ is equivalent either to the orthogonal involution or to the symplectic involution. If we denote it by $s$ then it is given by $A^s = D A^\tau D^{-1}$, where $D=\mathrm{diag}(\underbrace{1,1,\ldots,1}_\text{$\frac{n}2$ times},\underbrace{-1,-1,\ldots,-1}_\text{$\frac{n}2$ times})$.

The homogeneous involutions on $\mathrm{UT}_n$ are classified in \cite{Mello} (and the graded involutions in \cite{FDGY}, see \cite{DRG} as well). It turns out that each homogeneous involution is equivalent to either the orthogonal or the symplectic involution (where the symplectic case can occur only if the size of the matrices is even). A homogeneous involution exists if and only if the grading satisfies certain conditions.

\subsection{Basis of the relatively free algebra of \texorpdfstring{$\mathrm{UT}_n$}{\mathrm{UT}}}
The following result is needed:
\begin{lemma}[{part of \cite[Theorem 5.2.1]{Drenskybook}}]\label{lem1}
Let $\mathbb{F}$ be an infinite field. Then, a basis of the relatively free algebra generated by $\mathrm{UT}_n$ consists of all polynomials of the form
$$
x_1^{p_1}\cdots x_r^{p_r}c_1\cdots c_s,
$$
where $p_1,\ldots,p_r\ge0$, $0\le s<n$ and each $c_i$ is a commutator of the kind $c_i=[x_{j_1},\ldots,x_{j_t}]$ and the indices satisfy $t\ge2$, $j_1>j_2$ and $j_2\le j_3\le\cdots\le j_t$.
\end{lemma}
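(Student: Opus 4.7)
The plan is to establish the result by separately proving (a) that the listed polynomials span the relatively free algebra $\mathbb{F}\langle X\rangle/\mathrm{Id}(\mathrm{UT}_n)$ and (b) that they are linearly independent modulo $\mathrm{Id}(\mathrm{UT}_n)$. Since $\mathbb{F}$ is infinite, every T-ideal is generated by its multihomogeneous elements, so I would work one multidegree at a time throughout.

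For spanning, I would first convert any associative monomial into ``collected form'' using the Leibniz rule $[a,bc]=[a,b]c+b[a,c]$ and the identity $ab = ba + [a,b]$. An induction on total degree (with lexicographic tie-breaking on the multiset of variables) lets me move every occurrence of the largest variable to the right, producing a sum of terms of the shape $x_1^{p_1}\cdots x_r^{p_r}\,c_1\cdots c_s$, where each $c_i$ is a left-normed commutator. To enforce the constraints $j_1>j_2$ and $j_2\le j_3\le\cdots\le j_t$ inside a single commutator $[x_{j_1},\ldots,x_{j_t}]$, I would use antisymmetry of the inner bracket (giving the strict inequality $j_1>j_2$ after discarding vanishing terms with $j_1=j_2$) and the Jacobi identity $[[a,b],c]=[[a,c],b]+[a,[b,c]]$, applied iteratively to swap adjacent $j_k,j_{k+1}$ with $j_k>j_{k+1}$, the correction term being a strictly shorter nested commutator that can be absorbed into the collection by induction.

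To bound $s<n$, I would invoke the basic structural identity satisfied by $\mathrm{UT}_n$: since every additive commutator $[A,B]$ of upper triangular matrices has zero diagonal, it is strictly upper triangular, and a product of $n$ strictly upper triangular $n\times n$ matrices vanishes. Hence $c_1\cdots c_n\equiv 0$ on $\mathrm{UT}_n$, which, combined with the preceding paragraph, already shows that the proposed set spans.

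The main obstacle is linear independence. For this I would fix a multidegree and evaluate on generic upper triangular matrices $X_i$ whose entries are independent commuting indeterminates. The key observation is that a left-normed commutator of length $t$ in upper triangular matrices lands in the strictly upper part and, when evaluated on generic matrices, its entries on the $k$th superdiagonal are explicit polynomials in the generic entries whose leading monomials encode the sequence $(j_1,j_2,\ldots,j_t)$. Ordering the basis lexicographically by the exponent vector $(p_1,\ldots,p_r)$ followed by the commutator data, one checks that distinct basis elements contribute distinct leading monomials to the product matrix entries when $s<n$, giving a triangular linear system whose nonsingularity establishes independence. The bookkeeping here --- tracking which superdiagonal each commutator populates and verifying the distinctness of the resulting leading monomials under the constraints $j_1>j_2$, $j_2\le\cdots\le j_t$ --- is the delicate part, and is precisely where the normal form pays off.
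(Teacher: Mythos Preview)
The paper does not prove this lemma; it is quoted from Drensky's textbook as a known result and used as a black box, so there is no in-paper argument to compare against. Your sketch follows the standard route one finds in that reference: spanning via iterated use of $ab=ba+[a,b]$ and the Jacobi identity to reach the Specht normal form, combined with the identity $[y_1,z_1]\cdots[y_n,z_n]\equiv0$ on $\mathrm{UT}_n$ to bound the number of commutator factors, and linear independence via evaluation on generic upper triangular matrices and a leading-monomial argument.

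The outline is sound. The one place I would tighten is the inductive step in the spanning argument: when you apply Jacobi to swap $j_k$ and $j_{k+1}$, the correction term $[u,[x_{j_k},x_{j_{k+1}}]]$ is not a strictly shorter left-normed commutator but rather has the same length with a different bracketing, so length alone does not drive the induction. One needs a finer well-ordering (for instance, lexicographic on the tail $(j_2,\dots,j_t)$ after fixing the length, or the number of inversions among $j_2,\dots,j_t$) to guarantee termination. With that adjustment the argument goes through.
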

From \Cref{lem1}, for a fixed integer $m\ge2(n-1)$, the set of multilinear ordinary polynomials in $m$ variables
$$
x_{i_1}\cdots x_{i_r}c_1\cdots c_{n-1},
$$
where $i_1<\cdots<i_r$, $c_i=[x_{j_1^{(i)}},x_{j_2^{(i)}},\ldots,x_{j_{s_i}^{(i)}}]$, $j_1^{(i)}>j_2^{(i)}<\cdots<j_{s_i}^{(i)}$, is linearly independent modulo $\mathrm{Id}(\mathrm{UT}_n)$. Let $q_m$ denote the number of such polynomials.

\begin{lemma}\label{lem2}
$\displaystyle q_m\sim c_m(\mathrm{UT}_n)\sim m^{n-1}n^{m-n+1}$.
\end{lemma}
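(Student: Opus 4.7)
My plan is to compute $q_m$ asymptotically by a direct generating-function argument, and then use \Cref{lem1} to decompose $c_m(\mathrm{UT}_n)$ and show that the remaining contributions are of strictly smaller order.

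For $q_m$, I would first analyse the combinatorics of a single multilinear commutator. Given a commutator $[x_{j_1},\ldots,x_{j_k}]$ on $k\ge 2$ distinct indices, the chain $j_2<j_3<\cdots<j_k$ forces $j_2$ to be the minimum of the index set, while $j_1$ can be freely chosen to be any of the remaining $k-1$ indices. Hence a single commutator contributes the exponential generating function $f(z)=\sum_{k\ge2}(k-1)\frac{z^k}{k!}=(z-1)e^z+1$. The free prefix $x_{i_1}\cdots x_{i_r}$ with $i_1<\cdots<i_r$ contributes $e^z$, and since different orderings of $c_1,\ldots,c_{n-1}$ yield distinct polynomials in \Cref{lem1} (they differ modulo $\mathrm{Id}(\mathrm{UT}_n)$ by basis elements with fewer commutators), the EGF of $q_m$ is $Q(z)=e^zf(z)^{n-1}$. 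Expanding via the binomial theorem,
\[
Q(z)=\sum_{j=0}^{n-1}\binom{n-1}{j}(z-1)^j\,e^{(j+1)z}.
\]
The summand $j=n-1$ dominates $[z^m]Q(z)$, since the bases $j+1<n$ for $j<n-1$ give exponentially smaller coefficients, and within this summand the leading contribution comes from the monomial $z^{n-1}e^{nz}$. I conclude
\[
q_m=m!\,[z^m]Q(z)\sim\frac{m!}{(m-n+1)!}\,n^{m-n+1}\sim m^{n-1}n^{m-n+1}.
\]

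To pass from $q_m$ to $c_m(\mathrm{UT}_n)$, I decompose the multilinear basis of \Cref{lem1} by the number $s\in\{0,1,\ldots,n-1\}$ of commutators, so that $c_m(\mathrm{UT}_n)=\sum_{s=0}^{n-1}q_m^{(s)}$ and $q_m=q_m^{(n-1)}$. The same EGF argument with $f(z)^s$ in place of $f(z)^{n-1}$ yields $q_m^{(s)}\sim m^s(s+1)^{m-s}$. Since $s+1<n$ whenever $s<n-1$, the ratio $q_m^{(s)}/q_m$ decays exponentially in $m$, so the sum is dominated by the top stratum and $c_m(\mathrm{UT}_n)\sim q_m\sim m^{n-1}n^{m-n+1}$.

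The argument is essentially routine once the generating function is in place, and I do not anticipate a genuine technical obstacle. The two points that do require some care are the combinatorial factor $k-1$ per commutator in the multilinear setting, and the convention that distinct orderings of $c_1\cdots c_{n-1}$ are counted as distinct polynomials in \Cref{lem1}; the latter matches the stated asymptotic (otherwise an unwanted $(n-1)!$ would appear in the denominator of the leading term).
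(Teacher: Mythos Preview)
Your argument is correct, but it proceeds quite differently from the paper's proof. The paper does not compute $q_m$ directly at all: it invokes the known asymptotic $c_m(\mathrm{UT}_n)\sim m^{n-1}n^{m-n+1}$ from \cite{BR}, observes that the multilinear basis of \Cref{lem1} with at most $n-2$ commutators is precisely a basis for $P_m/P_m\cap\mathrm{Id}(\mathrm{UT}_{n-1})$, so that $c_m(\mathrm{UT}_n)=q_m+c_m(\mathrm{UT}_{n-1})$, and then concludes from $c_m(\mathrm{UT}_{n-1})=o(c_m(\mathrm{UT}_n))$. In other words, the paper \emph{deduces} $q_m$ from $c_m(\mathrm{UT}_n)$, whereas you go in the opposite direction, computing $q_m$ (and every $q_m^{(s)}$) by an explicit EGF count and summing to recover $c_m(\mathrm{UT}_n)$.

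Your route is longer but self-contained: it avoids the citation of \cite{BR} and in effect reproves that asymptotic from the combinatorics of the Drensky basis. Your stratification $c_m(\mathrm{UT}_n)=\sum_{s=0}^{n-1}q_m^{(s)}$ is the unwound form of the paper's recursion $c_m(\mathrm{UT}_n)=q_m+c_m(\mathrm{UT}_{n-1})$, and your remark about counting distinct orderings of $c_1\cdots c_{n-1}$ as distinct basis elements is exactly the convention the paper uses (and is forced by the stated asymptotic). The only cosmetic point is that your generating-function manipulations, while routine, are more work than the paper's two-line argument; if the result of \cite{BR} is taken as known, the recursion is the quicker path.
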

\begin{proof}
One has that $c_m(\mathrm{UT}_n)=q_m+c_m(\mathrm{UT}_{n-1})$, from a basis of the relatively free algebra of $\mathrm{UT}_n$. It is known that (see \cite[Theorem 1.4]{BR})
$$
c_m(\mathrm{UT}_n)\sim m^{n-1}n^{m-n+1}.
$$
The result follows, since $\lim_{m\to\infty}\frac{c_m(\mathrm{UT}_{n-1})}{c_m(\mathrm{UT}_{n})}=0$.
\end{proof}

\section{Main result}
We let $\ast$ be an involution for $\mathrm{UT}_n$. It is known that we may assume that $\ast$ is either the orthogonal or the symplectic involution \cite[Proposition 2.5]{VKS}. 

We shall obtain a suitable lower bound for the $\ast$-codimension sequence of $\mathrm{UT}_n$.
\begin{lemma}
If $m\ge2(n-1)$, then the set of polynomials in $P_m^\ast$
\begin{equation}\label{pol}
x_{i_1}\cdots x_{i_r}c_1\cdots c_{n-1},
\end{equation}
where $i_1<\cdots<i_r$, $c_i=[x_{j_1^{(i)}}^{\delta_i},x_{j_2^{(i)}},\ldots,x_{j_{s_i}^{(i)}}]$, $j_1^{(i)}>j_2^{(i)}<\cdots<j_{s_i}^{(i)}$, $\delta_i\in\{\emptyset,\ast\}$ and $\delta_\ell=\emptyset$ for all $\ell>\lfloor\frac{n-1}2\rfloor$, is linearly independent modulo $\mathrm{Id}(\mathrm{UT}_n,\ast)$.
\end{lemma}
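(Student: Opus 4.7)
The plan is to extend the evaluation technique from the proof of \Cref{lem1} to the $\ast$-setting. For each polynomial $p$ in the stated set, I would construct a $\ast$-compatible evaluation $\phi_p$ in $(\mathrm{UT}_n, \ast)$ such that $\phi_p(p)$ is a non-zero scalar multiple of $e_{1,n}$ while $\phi_p(p')$ vanishes on every other $p'$ in the set. This would immediately yield the linear independence modulo $\mathrm{Id}(\mathrm{UT}_n, \ast)$.

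The concrete recipe for $p = x_{i_1} \cdots x_{i_r} c_1 \cdots c_{n-1}$ is as follows: send the first variable of $c_i$ to $e_{i, i+1}$ if $\delta_i = \emptyset$, and to $e_{i, i+1}^\ast$ if $\delta_i = \ast$ (which is $e_{n-i, n-i+1}$ for the orthogonal involution and $\pm e_{n-i, n-i+1}$ for the symplectic one). All remaining variables (the non-first commutator variables and the non-commutator variables) are sent to diagonal matrices with generic parameters. Iterating the identity $[e_{k, l}, \mathrm{diag}(d_1, \ldots, d_n)] = (d_l - d_k) e_{k, l}$, each $\phi_p(c_i)$ becomes a non-zero scalar multiple of $e_{i, i+1}$ (for generic parameters), so $\phi_p(c_1 \cdots c_{n-1})$ is a non-zero scalar multiple of $e_{1, n}$, and left-multiplication by the diagonal images of the non-commutator variables preserves this.

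The main step is to show $\phi_p(p') = 0$ for any other $p'$ in the set, which I would split into two cases. First, if $p'$ has a different combinatorial shape from $p$ (different allocation of variables into commutator versus non-commutator roles, different first variables inside commutators, or different ordering of commutators), then $\phi_p$ either sends a variable playing a first-commutator role in $p'$ to a diagonal matrix (so the corresponding commutator contributes trivially to the $(1, n)$-entry) or misaligns the chain $e_{1, 2} e_{2, 3} \cdots e_{n-1, n}$; this is essentially the mechanism already present in the proof of \Cref{lem1}. Second, if $p'$ has the same shape as $p$ but differs in $\delta_i$ for some $i \leq \lfloor (n-1)/2 \rfloor$, then the first variable of $c_i'$ is evaluated to $e_{n-i, n-i+1}$ instead of $e_{i, i+1}$ (or vice versa); since $i \leq \lfloor (n-1)/2 \rfloor$ forces $n - i > i$, these two elementary matrices are distinct, and the chain breaks in its $(1, n)$-component.

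The hard part will be the case analysis above---in particular, verifying that every distinct shape truly zeroes the $(1, n)$-entry with no surviving cross terms, and tracking how each allowable $\delta$-flip ruins the chain. The restriction $\delta_\ell = \emptyset$ for $\ell > \lfloor (n-1)/2 \rfloor$ is precisely what keeps the $\ast$-swap of matrix units from reintroducing polynomials indistinguishable under the evaluation via the pairing $(c_\ell, c_{n-\ell})$. For the symplectic involution, the conjugation by $D$ produces non-zero scalar signs, which do not affect the linear independence argument.
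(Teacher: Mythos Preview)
Your overall plan---a separating evaluation for each polynomial---is exactly the paper's route, but the choice of diagonal substitutions is where it breaks. With \emph{generic} diagonals the claim $\phi_p(p')=0$ for $p'\ne p$ is false: if $p'$ has the same non-commutator set $I$ and the same first variable in each commutator as $p$, but distributes the remaining (non-first) variables differently among the $n-1$ commutators, then each $c_i'$ still evaluates to a nonzero multiple of $e_{i,i+1}$ (since $[e_{i,i+1},D]=(d_{i+1}-d_i)e_{i,i+1}\ne0$ for a generic diagonal $D$), so $\phi_p(p')$ is a nonzero multiple of $e_{1,n}$. The paper avoids this by sending the $\ell$-th variable ($\ell\ge2$) of $c_i$ to the \emph{specific} idempotent $\sum_{j>i}e_{jj}$, for which $[e_{i',i'+1},\sum_{j>i}e_{jj}]=\delta_{i,i'}\,e_{i,i+1}$; a non-first variable placed in the ``wrong'' commutator then kills it. The paper also sets the variables in a maximal $I_0$ to $1$ (rather than generic diagonals), which forces $I=I_0$ at the outset---your generic diagonals do not give this reduction either.

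Your two-case split also leaves a seam. A polynomial $p'$ can differ from $p$ in $J$ and $\overline{\delta}$ simultaneously: take $x_{j_1^{(n-i)}}$ as the first variable of $c_i'$ with $\delta_i'=\ast$; then $(x_{j_1^{(n-i)}})^\ast$ evaluates to $\pm e_{i,i+1}$ and the chain does \emph{not} break at position $i$. This is neither Case~1 (``mechanism from \Cref{lem1}'', which knows nothing about $\ast$) nor Case~2 (same $J$, different $\overline{\delta}$). You correctly note in your final sentence that the restriction $\delta_\ell=\emptyset$ for $\ell>\lfloor(n-1)/2\rfloor$ is what saves the day, and that is indeed the crux of the paper's argument: if $k_1^{(i)}=j_1^{(n-i)}$ with $\delta_i'=\ast$, the displaced index $j_1^{(i)}$ must land as $k_1^{(n-i)}$, where $\delta_{n-i}'=\emptyset$ is forced and the chain breaks at position $n-i$. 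But this inductive step needs to be argued explicitly; it does not fall out of either of your two cases as stated.
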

\begin{proof}
Denote the polynomials \eqref{pol} by $f_{I,J,\overline{\delta}}$, where $\overline{\delta}=(\delta_1,\ldots,\delta_{n-1})$, $I=\{i_1,\ldots,i_r\}$ and $J=\{(j_1^{(i)},\ldots,j_{s_i}^{(i)})\mid i=1,\ldots,n-1\}$. Let
$$
f=\sum_{I,J,\overline{\delta}}\lambda_{I,J,\overline{\delta}}f_{I,J,\overline{\delta}}\in\mathrm{Id}(\mathrm{UT}_n,\ast).
$$
Between the elements of the summand with a possibly nonzero $\lambda_{I,J,\overline{\delta}}$, choose a maximal set $I_0=\{i_1,\ldots,i_r\}$. We evaluate $x_{i_1}=\cdots=x_{i_r}=1$ (the identity matrix). Then, all $f_{I,J,\overline{\delta}}$ annihilate but the ones with $I_0\subseteq I$; i.e., $I_0=I$ since $I_0$ is maximal. We fix a $J_0=((j_1^{(i)},\ldots,j_{s_i}^{(i)})\mid i=1,\ldots,n-1)$. 

We fix a set $\overline{\delta}_0$ and we evaluate:
\begin{align*}
&x_{j_\ell^{(i)}}=\sum_{j=i+1}^ne_{jj},\quad\ell\ge2,\\%
&x_{j_1^{(i)}}^{\delta_i}=e_{i,i+1}.
\end{align*}
Without loss of generality, we may assume that $\overline{\delta}_0=(\emptyset,\ldots,\emptyset)$. If a polynomial $f_{I,J,\overline{\delta}'}$ gives a non-zero evaluation, then $I=I_0$ and $J$ coincides with $J_0$, except possibly in the first entry of each of their sequences. Therefore, we may write
$$
J=((k_1^{(i)},j_2^{(i)},\ldots,j_{s_i}^{(i)})\mid i=1,\ldots,n-1).
$$
To obtain a nonzero evaluation, we would need
$$
x_{k_1^{(i)}}^{\delta_i'}=e_{i,i+1},\quad \text{ for }i=1,2,\ldots,n-1,
$$
where $\overline{\delta}'=(\delta_1',\ldots,\delta_{n-1}')$. Thus, for each $i$, either $\delta_i'=\emptyset$ and $k_1^{(i)}=j_1^{(i)}$, or $\delta_i'=\ast$ and $k_1^{(i)}=j_1^{(n-i)}$. If $k_1^{(1)}=j_1^{(n-1)}$ and $\delta_1'=\ast$, then necessarily $k_1^{(n-1)}=j_1^{(1)}$ and $\delta_{n-1}'=\ast$. However, from definition, $\delta_\ell'=\emptyset$ for all $\ell>\lfloor\frac{n-1}2\rfloor$, a contradiction. Hence, $k_1^{(1)}=j_1^{(1)}$ and $\delta_1'=\emptyset$. An inductive argument shows that $J=J_0$ and $\overline{\delta}=\overline{\delta}_0$. Therefore, for such choice, there is a unique $f_{I,J,\overline{\delta}}$ that gives a non-zero evaluation. The result is proved.
\end{proof}

As a consequence, we obtain that $c_m(\mathrm{UT}_n,\ast)\ge2^{\lfloor\frac{n-1}2\rfloor}q_m$, for all $m\ge2(n-1)$. On the other hand, let $G$ be any group, $\Gamma$ a $G$-grading on $\mathrm{UT}_n$ and $\ast$ a homogeneous involution on $(\mathrm{UT}_n,\Gamma)$. We may assume, up to an automorphism, that every matrix unit is homogeneous with respect to $\Gamma$ and $\ast$ is either the symplectic or orthogonal involution. Now, let $\Delta$ be the unique fine grading on $\mathrm{UT}_n$. It means that $\Delta$ is an $F$-grading, where $F$ is the free group freely generated by $\{r_1,\ldots,r_{n-1}\}$ and we impose $\deg_\Delta e_{i,i+1}=r_i$. Then, $\ast$ is a homogeneous involution on $(\mathrm{UT}_n,\Delta)$ as well. The group homomorphism $\alpha:F\to G$, where $\alpha(r_i)=\deg_\Gamma e_{i,i+1}$ induces a $G$-grading ${}^\alpha\Delta$ on $\mathrm{UT}_n$, so that $(\mathrm{UT}_n,{}^\alpha\Delta,\ast)\cong(\mathrm{UT}_n,\Gamma,\ast)$. Hence, a similar argument of \cite[Lemma 3.1]{BahGR98} (see \cite{KY} as well) shows that
$$
c_m(\mathrm{UT}_n,\ast)\le c_m(\mathrm{UT}_n,\Gamma,\ast)\le c_m(\mathrm{UT}_n,\Delta,\ast).
$$
Furthermore, \cite[Theorem 20]{MY} proves that $c_m(\mathrm{UT}_n,\Delta,\ast)\sim\frac{2^{\lfloor\frac{n-1}2\rfloor}}{n^{n-1}}m^{n-1}n^m$. Hence, we proved
\begin{theorem}
Let $G$ be a group, $\mathbb{F}$ be an infinite field, $\Gamma$ a $G$-grading on $\mathrm{UT}_n$ and $\ast$ a homogeneous involution on $(\mathrm{UT}_n,\Gamma)$. Then
$$
c_m(\mathrm{UT}_n,\Gamma,\ast)\sim\frac{2^{\lfloor\frac{n-1}2\rfloor}}{n^{n-1}}m^{n-1}n^m.
$$
In particular, $\mathrm{exp}(\mathrm{UT}_n,\Gamma,\ast)=n$. \qed
\end{theorem}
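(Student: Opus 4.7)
The plan is to establish the asymptotic formula by sandwiching $c_m(\mathrm{UT}_n,\Gamma,\ast)$ between two sequences with the same leading-order behaviour. Both bounds will come from comparing $\Gamma$ to two canonical gradings on $\mathrm{UT}_n$: the trivial grading (yielding the lower bound) and the unique fine grading $\Delta$ (yielding the upper bound). The asymptotic value will be pinned down by the linear-independence lemma above together with the already-known fine-grading asymptotic from \cite{MY}.

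For the lower bound, I would use the fact that coarsening a grading can only decrease the codimension; in particular, every $\ast$-identity of the ungraded algebra is automatically a graded $\ast$-identity, so $c_m(\mathrm{UT}_n,\ast)\le c_m(\mathrm{UT}_n,\Gamma,\ast)$. The preceding lemma exhibits $2^{\lfloor(n-1)/2\rfloor}q_m$ polynomials in $P_m^\ast$ that are linearly independent modulo $\mathrm{Id}(\mathrm{UT}_n,\ast)$, hence $c_m(\mathrm{UT}_n,\ast)\ge 2^{\lfloor(n-1)/2\rfloor}q_m$. Combining this with the asymptotic $q_m\sim m^{n-1}n^{m-n+1}$ from \Cref{lem2} gives
\[
c_m(\mathrm{UT}_n,\Gamma,\ast)\ge 2^{\lfloor(n-1)/2\rfloor}q_m\sim\frac{2^{\lfloor(n-1)/2\rfloor}}{n^{n-1}}m^{n-1}n^m.
\]

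For the upper bound, the classifications recalled in the preceding subsection let me assume, up to isomorphism, that every matrix unit is $\Gamma$-homogeneous and that $\ast$ is either the orthogonal or the symplectic involution. Let $F$ be the free group on generators $r_1,\ldots,r_{n-1}$, and let $\Delta$ be the fine $F$-grading defined by $\deg_\Delta e_{i,i+1}=r_i$; under the above normalisation, $\ast$ is still $\Delta$-homogeneous. The group homomorphism $\alpha\colon F\to G$ sending $r_i\mapsto\deg_\Gamma e_{i,i+1}$ presents $\Gamma$ as the coarsening ${}^\alpha\Delta$, so a standard specialisation argument in the spirit of \cite[Lemma 3.1]{BahGR98} (see also \cite{KY}) yields $c_m(\mathrm{UT}_n,\Gamma,\ast)\le c_m(\mathrm{UT}_n,\Delta,\ast)$. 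The right-hand side is asymptotic to $\frac{2^{\lfloor(n-1)/2\rfloor}}{n^{n-1}}m^{n-1}n^m$ by \cite[Theorem 20]{MY}. Sandwiching the two bounds yields the theorem, and $\mathrm{exp}(\mathrm{UT}_n,\Gamma,\ast)=n$ follows immediately from the exponential factor $n^m$.

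The main obstacle has already been paid for inside the preceding lemma: producing the correct number $2^{\lfloor(n-1)/2\rfloor}$ of sign patterns $\overline{\delta}$ that survive the involution -- and in particular justifying the cutoff $\delta_\ell=\emptyset$ for $\ell>\lfloor\frac{n-1}{2}\rfloor$ -- is the delicate combinatorial step, since naively allowing a $\ast$ in each of the $n-1$ commutators would force collisions between the evaluations $e_{i,i+1}^\ast=e_{n-i,n-i+1}$. Once this lower bound is in hand, the remainder of the argument is essentially formal: coarsening monotonicity for graded $\ast$-codimensions on one side, and the known fine-grading asymptotic on the other.
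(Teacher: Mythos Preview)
Your proposal is correct and follows essentially the same route as the paper: sandwich $c_m(\mathrm{UT}_n,\Gamma,\ast)$ between $c_m(\mathrm{UT}_n,\ast)$ and $c_m(\mathrm{UT}_n,\Delta,\ast)$ via the coarsening monotonicity of \cite[Lemma~3.1]{BahGR98}, feed in the lower bound $2^{\lfloor(n-1)/2\rfloor}q_m$ from the preceding lemma together with \Cref{lem2}, and invoke \cite[Theorem~20]{MY} for the upper bound. The only minor imprecision is your one-line justification ``every $\ast$-identity of the ungraded algebra is automatically a graded $\ast$-identity, so $c_m(\mathrm{UT}_n,\ast)\le c_m(\mathrm{UT}_n,\Gamma,\ast)$'': containment of identity sets does not by itself yield the codimension inequality since the ambient spaces $P_m^{(\ast)}$ differ, but the inequality is exactly the trivial-grading instance of the same specialisation argument you cite for the upper bound, so no real gap.
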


In particular, we answer a question posed by D.~Diniz et al in \cite{DRG}:
\begin{corollary}
Let $G$ be a group, $\mathbb{F}$ be an infinite field, $\Gamma$ a $G$-grading on $\mathrm{UT}_n$ and $\ast$ a graded-involution on $(\mathrm{UT}_n,\Gamma)$. Then
$$
c_m(\mathrm{UT}_n,\Gamma,\ast)\sim\frac{2^{\lfloor\frac{n-1}2\rfloor}}{n^{n-1}}m^{n-1}n^m.
$$\qed
\end{corollary}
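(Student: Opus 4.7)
The plan is to deduce the corollary as a direct specialization of the Theorem. By definition, a graded-involution on a $G$-graded algebra $(\mathrm{UT}_n,\Gamma)$ is an involution $\ast$ that preserves each homogeneous component, that is, $(\mathrm{UT}_n)_g^\ast \subseteq (\mathrm{UT}_n)_g$ for every $g \in G$. In the framework of the present paper, this is precisely the condition for $\ast$ to be a homogeneous involution with respect to the identity map $\psi = \mathrm{id}_G\colon G \to G$ (the map $\psi$ is automatically an anti-automorphism of order at most $2$ in this case). Thus every graded-involution is a particular instance of a homogeneous involution.

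First I would invoke this observation to check that the hypotheses of the Theorem are met: the group $G$, the infinite field $\mathbb{F}$, the $G$-grading $\Gamma$ on $\mathrm{UT}_n$, and the graded-involution $\ast$ (reinterpreted as a homogeneous involution with $\psi = \mathrm{id}_G$) form exactly the data required by the Theorem. Then I would apply the Theorem verbatim to conclude
$$
c_m(\mathrm{UT}_n,\Gamma,\ast) \sim \frac{2^{\lfloor\frac{n-1}2\rfloor}}{n^{n-1}}\, m^{n-1} n^m.
$$

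There is no genuine obstacle: the corollary is nothing but the Theorem restated for the narrower class of graded-involutions. Its significance is that it answers the question posed in \cite{DRG}, even though the mathematical content was already fully captured by the Theorem, whose proof combined the lower bound from the preceding Lemma with the upper bound obtained by comparing $\Gamma$ to the unique fine grading $\Delta$ on $\mathrm{UT}_n$ and invoking \cite[Theorem 20]{MY}.
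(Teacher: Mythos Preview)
Your proposal is correct and matches the paper's approach: the corollary is stated there with no proof beyond a \qed, being an immediate specialization of the Theorem to the subclass of graded-involutions among homogeneous involutions. One minor remark: a graded-involution is usually taken to satisfy $\mathcal{A}_g^{\ast}\subseteq\mathcal{A}_{g^{-1}}$ (so $\psi(g)=g^{-1}$, which is always an anti-automorphism of order at most $2$) rather than $\psi=\mathrm{id}_G$, but either way it is a homogeneous involution and your deduction from the Theorem goes through unchanged.
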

As a particular case, we obtain the asymptotic behaviour of the $\ast$-codimension sequence of $\mathrm{UT}_n$.
\begin{corollary}
Let $\mathbb{F}$ be an infinite field and $\ast$ a involution on $\mathrm{UT}_n$. Then
$$
c_m(\mathrm{UT}_n,\ast)\sim\frac{2^{\lfloor\frac{n-1}2\rfloor}}{n^{n-1}}m^{n-1}n^m.
$$\qed
\end{corollary}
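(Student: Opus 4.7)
The plan is to deduce the corollary as an immediate specialization of the main theorem. First I would take $G$ to be the trivial group and let $\Gamma = \Gamma_{\mathrm{triv}}$ be the associated trivial grading on $\mathrm{UT}_n$, so that $(\mathrm{UT}_n)_1 = \mathrm{UT}_n$ and every other homogeneous component is zero. Under this choice, any involution $\ast$ on $\mathrm{UT}_n$ automatically satisfies $(\mathrm{UT}_n)_1^\ast \subseteq (\mathrm{UT}_n)_1$, hence qualifies as a homogeneous involution in the sense of the definition given earlier, where the map $\psi \colon G \to G$ is forced to be the identity.

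Next I would invoke the definitional identity $c_m(\mathrm{UT}_n, \ast) = c_m(\mathrm{UT}_n, \Gamma_{\mathrm{triv}}, \ast)$, which is exactly how the $\ast$-codimension sequence was introduced as a specialization of the graded $\ast$-codimension sequence. Combined with the main theorem applied to $(G, \Gamma, \ast) = (\{1\}, \Gamma_{\mathrm{triv}}, \ast)$, this yields
$$
c_m(\mathrm{UT}_n, \ast) \sim \frac{2^{\lfloor (n-1)/2 \rfloor}}{n^{n-1}} m^{n-1} n^m,
$$
as required.

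There is essentially no obstacle here: the only nontrivial content, namely the coincidence of lower and upper asymptotic bounds with the stated value, has already been absorbed into the main theorem. The cited classification result from \cite{VKS} guarantees that the involution $\ast$ is equivalent to the orthogonal or the symplectic involution, so the coefficient $2^{\lfloor (n-1)/2 \rfloor}$ is the same irrespective of which of the two cases one is in, matching the statement of the corollary.
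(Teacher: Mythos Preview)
Your proposal is correct and matches the paper's own treatment: the corollary is stated as an immediate particular case of the main theorem (the paper gives no separate proof, only a \qed), obtained exactly by taking the trivial grading so that $c_m(\mathrm{UT}_n,\ast)=c_m(\mathrm{UT}_n,\Gamma_{\mathrm{triv}},\ast)$ and any involution is automatically homogeneous.
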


\section*{Acknowledgments}
We express our gratitude to the anonymous referees for their valuable suggestions and corrections, which have improved the exposition of this paper. D.~Diniz was partially supported by FAPESQ grant No.~3099/2021 and CNPq grants No.~304328/2022-7 and No.~405779/2023-2. F.~Yasumura was supported by FAPESP, grants No.~2018/23690-6 and No.~2023/03922-8.

{\small\bibliography{cimart}}
\EditInfo{August 12, 2024}{October 29, 2024}{Ivan Kaygorodov}
\end{document}